\theoremstyle{theorem} \newtheorem{thm}{Theorem}[section]
\theoremstyle{theorem} \newtheorem{lem}[thm]{Lemma}
\theoremstyle{theorem} \newtheorem{prop}[thm]{Proposition}
\theoremstyle{theorem} 
\theoremstyle{definition} \newtheorem{defn}[thm]{Definition}
\theoremstyle{remark} 
\theoremstyle{remark} 
\theoremstyle{definition} 
\theoremstyle{remark} \newtheorem{bem}[thm]{Remark}
\theoremstyle{remark} 
\theoremstyle{definition}  
\theoremstyle{definition}  
\theoremstyle{definition} 
\theoremstyle{definition} \newtheorem*{ack}{Acknowledgement}
\DeclareMathOperator \re {Re}
\DeclareMathOperator \spt {supp}
\DeclareMathOperator \tr {tr}
\DeclareMathOperator \loc {loc}
\newcommand\sprod[1]{\langle#1\rangle}
\newcommand{\I}{\mathds{1}}
\newcommand\floor[1]{\left\lfloor #1 \right\rfloor}
\newcommand\fa{\qquad \text{for all \ }}
\newcommand{\cadlag}{c\`adl\`ag }
\newcommand\mc[1] {\mathcal{#1}}
\newcommand\mbb[1] {\mathds{#1}}
\newcommand{\eps}{\varepsilon}
\DeclareRobustCommand\widecheck[1]{{\mathpalette\@widecheck{#1}}}
\def\@widecheck#1#2{%
    \setbox\z@\hbox{\m@th$#1#2$}%
    \setbox\tw@\hbox{\m@th$#1%
       \widehat{%
          \vrule\@width\z@\@height\ht\z@
          \vrule\@height\z@\@width\wd\z@}$}%
    \dp\tw@-\ht\z@
    \@tempdima\ht\z@ \advance\@tempdima2\ht\tw@ \divide\@tempdima\thr@@
    \setbox\tw@\hbox{%
       \raise\@tempdima\hbox{\scalebox{1}[-1]{\lower\@tempdima\box
\tw@}}}%
    {\ooalign{\box\tw@ \cr \box\z@}}}
\begin{document}

\title{A Liouville theorem for L\'evy generators}
\author[F.~K\"{u}hn]{Franziska K\"{u}hn} 
\address[F.~K\"{u}hn]{TU Dresden, Fachrichtung Mathematik, Institut f\"{u}r Mathematische Stochastik, 01062 Dresden, Germany.}
\email{franziska.kuehn1@tu-dresden.de}
\subjclass{Primary 60G51, 35B53; Secondary 31C05, 35R09, 60J35}
\keywords{Liouville theorem;  pseudo-differential operator; L\'evy process}

\begin{abstract}
	Under mild assumptions, we establish a Liouville theorem for the ``Laplace'' equation $Au=0$ associated with the infinitesimal generator $A$ of a L\'evy process: If $u$ is a weak solution to $Au=0$ which is at most of (suitable) polynomial growth, then $u$ is a polynomial. As a by-product, we obtain new regularity estimates for semigroups associated with L\'evy processes.
\end{abstract}

\maketitle

\section{Introduction} \label{intro}

The classical Liouville theorem states that any bounded solution $u:\mbb{R}^d \to \mbb{R}$ to the Laplace equation $\Delta u=0$ is constant. There is an extension for unbounded functions: If $\Delta u=0$ and $u$ is at most of polynomial growth, say, $|u(x)| \leq C(1+|x|^k)$ for some constants $C>0$ and $k \in \mbb{N}_0$, then $u$ is a polynomial of degree at most $k$. In this paper, we extend this result to a wide class of integro-differential operators. More precisely, we establish a Liouville theorem for equations $Au=0$ where $A$ is of the form \begin{equation*}
	Af(x) = b \cdot \nabla f(x) + \frac{1}{2} \tr(Q \cdot \nabla^2 f(x)) + \int_{y \neq 0}(f(x+y)-f(x)-y \cdot \nabla f(x) \I_{(0,1)}(|y|)) \, \nu(dy), \,\, f \in C_c^{\infty}(\mbb{R}^d),
\end{equation*}
for some $b \in \mbb{R}^d$, a positive semi-definite matrix $Q \in \mbb{R}^{d \times d}$ and a measure $\nu$ on $(\mbb{R}^d \backslash \{0\},\mc{B}(\mbb{R}^d \backslash \{0\}))$ satisfying $\int_{y \neq 0} \min\{1,|y|^2\} \, \nu(dy)<\infty$. Equivalently, $A$ can be written as a pseudo-differential operator, \begin{equation}
	Af(x) = - \psi(D)f(x):= -\int_{\mbb{R}^d} \psi(\xi) e^{ix \cdot \xi} \hat{f}(\xi) \, d\xi, \qquad f \in C_c^{\infty}(\mbb{R}^d),\, x \in \mbb{R}^d, \label{pseudo}´
\end{equation}
where $\hat{f}(\xi)=(2\pi)^{-d} \int_{\mbb{R}^d} f(x) e^{-ix \cdot \xi} \, dx$ denotes the Fourier transform of $f$ and  the symbol $\psi$ is a continuous negative definite function with L\'evy--Khintchine representation\begin{equation}
	\psi(\xi) = i b \cdot \xi + \frac{1}{2} \xi \cdot Q\xi + \int_{y \neq 0} \left(1-e^{iy \cdot \xi} + iy \cdot \xi \I_{(0,1)}(|y|) \right) \, \nu(dy), \qquad \xi \in \mbb{R}^d. \label{char-exp}
\end{equation}
Since $A$ is the infinitesimal generator of a L\'evy process, see below, we also call $A$ a L\'evy generator. The family of L\'evy generators includes many interesting and important operators, e.g.\ the Laplacian $\Delta$, the fractional Laplacian $-(-\Delta)^{\alpha/2}$, $\alpha \in (0,2)$, and the free relativistic Hamiltonian $m-\sqrt{-\Delta+m^2}$, $m>0$. If $A$ is a local operator, i.e.\ $\nu=0$, then the Liouville theorem is classical, and so the focus is on the non-local case $\nu \neq 0$. For L\'evy generators with a sufficiently smooth symbol, there is a Liouville theorem by Fall \& Weth \cite{fall16}; the required regularity of $\psi$ increases with the dimension $d \in \mbb{N}$. Ros-Oton \& Serra \cite{ros-oton16} established a general Liouville theorem for symmetric stable operators, \begin{equation*}
	Af(x) = \int_{\mbb{S}^{d-1}}\int_{(0,\infty)}  \left( f(x+\theta r)+f(x-\theta r)-2f(x)\right) \frac{dr}{r^{d+\alpha}} \, \mu(d\theta), \qquad f \in C_c^{\infty}(\mbb{R}^d), \, x \in \mbb{R}^d,
\end{equation*}
where $\alpha \in (0,2)$ and $\mu$ is a non-negative finite measure on the unit sphere $\mbb{S}^{d-1}$ satisfying an ellipticity condition. The recent papers \cite{jakobsen,knop19} give necessary and sufficient conditions for the Liouville property, i.e.\ conditions under which the implication \begin{equation}
	u \in L^{\infty}(\mbb{R}^d), \, Au=0 \, \, \text{weakly} \implies \text{$u$ is constant} \label{liou-p}
\end{equation}
holds. Choquet \& Deny \cite{choquet60} characterized the bounded solutions $u$ to convolution equations of the form $u=u \ast \mu$; these equations play a central role in the study of the ``Laplace'' equation $Au=0$, see Lemma~\ref{p-3}. Since the Liouville theorem is an assertion on the smoothness of harmonic functions, there is a close connection between the Liouville theorem and Schauder estimates; see \cite{reg-levy,ros-oton16} and the references therein for recent results. We would like to mention that there are also Liouville theorems in the half-space, see e.g.\ \cite{cheng15,ros-oton16}, and  Liouville theorems for certain L\'evy-type operators, see e.g.\ \cite{barlow00,uemura11,priola04,wang12}. 

In this paper, we use a probabilistic approach, inspired by \cite{ros-oton16}, to prove a Liouville theorem for a wide class of L\'evy generators. Before stating the result, let us briefly recall some material from probability theory. It is well known, cf.\ \cite{sato,jacob1,barcelona}, that there is a one-to-one correspondence between continuous negative definite functions and L\'evy processes, i.e.\ stochastic processes with \cadlag (right-continuous with finite left-hand limits) sample paths and stationary and independent increments. Given a continuous negative definite function $\psi:\mbb{R}^d \to \mbb{C}$, there exists a L\'evy process $(X_t)_{t \geq 0}$ with semigroup $P_t f(x):= \mbb{E}f(x+X_t)$ satisfying
\begin{equation*}
	-\psi(D)f(x)= \lim_{t \to 0} \frac{P_t f(x)-f(x)}{t}, \qquad f \in C_c^{\infty}(\mbb{R}^d), \, x \in \mbb{R}^d,
\end{equation*}
which means that $A=-\psi(D)$ is the infinitesimal generator of $(X_t)_{t \geq 0}$. The L\'evy process $(X_t)_{t \geq 0}$ is uniquely determined by $\psi$, the so-called characteristic exponent of $(X_t)_{t \geq 0}$, and by the associated L\'evy triplet $(b,Q,\nu)$. The following theorem is our main result.

\begin{thm} \label{main-3}
	Let $(X_t)_{t \geq 0}$ be a L\'evy process with L\'evy triplet $(b,Q,\nu)$ and characteristic exponent $\psi$, and denote by $Af=-\psi(D)f$ the associated L\'evy generator. Assume that 
	\begin{enumerate}[label*=\upshape (C\arabic*),ref=\upshape C\arabic*] 
		\item\label{C1} $X_t$ has for each $t>0$ a density $p_t \in C_b^1(\mbb{R}^d)$ with respect to Lebesgue measure,
		\item\label{C2} there exists some $\beta>0$ such that $\int_{|y| \geq 1} |y|^{\beta} \, \nu(dy)<\infty$.
	\end{enumerate}
	If $u: \mbb{R}^d\to \mbb{R}$ is a weak solution to \begin{equation*}
		Au=0 \quad \text{in $\mbb{R}^d$}
	\end{equation*}
	satisfying $|u(x)| \leq M(1+|x|^{\gamma})$, $x \in \mbb{R}^d$, for some $M>0$ and $\gamma \in [0,\beta)$, then $u$ is a polynomial of degree at most $\floor{\gamma}$. In particular, $A$ has the Liouville property \eqref{liou-p}.
\end{thm}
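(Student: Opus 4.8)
The plan is to reduce the weak equation $Au=0$ to the semigroup fixed-point relation $u=P_tu$, to transfer the smoothing and the ``spreading'' of the transition densities $p_t$ onto $u$, and to conclude that every partial derivative of $u$ of order $\floor{\gamma}+1$ vanishes identically. First I would invoke the bridge between the weak equation and the convolution semigroup supplied by Lemma~\ref{p-3}: a weak solution of $Au=0$ is fixed by the semigroup, i.e.\ $u=P_tu$ for every $t>0$, where $P_tu(x)=\mbb{E}\,u(x+X_t)=\int_{\mbb{R}^d}u(x+y)\,p_t(y)\,dy$. This integral is well defined because $\gamma<\beta$ and assumption \ref{C2} is equivalent to $\mbb{E}\,|X_t|^{\beta}<\infty$, so that $\mbb{E}\,|X_t|^{\gamma}<\infty$ and the bound $|u|\le M(1+|\cdot|^{\gamma})$ is integrable against the law of $X_t$.

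Next, setting $k:=\floor{\gamma}$, I would upgrade the regularity in \ref{C1} from $C_b^1$ to $C^{k+1}$: using the Chapman--Kolmogorov identity $p_t=p_{t/(k+1)}^{\ast(k+1)}$ and distributing the derivatives over the convolution factors, a routine bootstrap yields $p_t\in C^{k+1}$ with bounded continuous derivatives. Differentiating under the integral sign (justified by the growth of $u$ and the integrability of the weighted derivatives of $p_t$ established below) then gives, for every multi-index $\alpha$ and every $t>0$,
\[
 \partial^{\alpha}u(x)=\partial^{\alpha}P_tu(x)=(-1)^{|\alpha|}\int_{\mbb{R}^d}u(x+y)\,\partial^{\alpha}p_t(y)\,dy,\qquad x\in\mbb{R}^d.
\]

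The crux of the argument -- and, I expect, the main obstacle -- is the regularity estimate advertised in the abstract: for every multi-index $\alpha$ with $|\alpha|=k+1>\gamma$ one has
\[
 I_{\alpha}(t):=\int_{\mbb{R}^d}(1+|y|^{\gamma})\,|\partial^{\alpha}p_t(y)|\,dy\xrightarrow{\ t\to\infty\ }0.
\]
Finiteness of $I_{\alpha}(t)$ should follow from \ref{C2}, which controls the tails of $p_t$ and, through the semigroup factorisation, of its derivatives (note that only the weight $|y|^{\gamma}$ with $\gamma<\beta$ costs moments, while the order $k+1$ of differentiation does not). The decay in $t$ reflects the spreading of $p_t$: on the Fourier side $\widehat{\partial^{\alpha}p_t}(\xi)=(i\xi)^{\alpha}e^{-t\psi(\xi)}$ concentrates near $\xi=0$, where the prefactor $(i\xi)^{\alpha}$ vanishes to order $|\alpha|=k+1>\gamma$; quantifying this against the weight $|y|^{\gamma}$ (a fractional smoothness in $\xi$) is where \ref{C1}, which forces decay of $e^{-t\psi}$ at infinity, and \ref{C2}, which gives the local regularity of $\psi$ at the origin, have to be combined. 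This is the step I expect to require the most work.

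Granting the estimate, the conclusion is immediate. Using $|x+y|^{\gamma}\le 2^{\gamma}(|x|^{\gamma}+|y|^{\gamma})$ together with $|u|\le M(1+|\cdot|^{\gamma})$ in the representation of $\partial^{\alpha}u$ yields
\[
 |\partial^{\alpha}u(x)|\le CM\,(1+|x|^{\gamma})\,I_{\alpha}(t)\qquad\text{for all }t>0,\ |\alpha|=k+1.
\]
Since the left-hand side does not depend on $t$, letting $t\to\infty$ forces $\partial^{\alpha}u(x)=0$ for every $x$ and every $|\alpha|=k+1$. As $u\in C^{k+1}$, this means that $u$ is a polynomial of degree at most $k=\floor{\gamma}$; in particular a bounded solution (the case $\gamma=0$, admissible since \ref{C2} gives $\beta>0$) is constant, which is the Liouville property \eqref{liou-p}.
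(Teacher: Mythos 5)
Your first step (Lemma~\ref{p-3}, reducing $Au=0$ to $u=P_tu$) matches the paper, but from there the argument has two genuine gaps, and the second one is fatal. \textbf{(i)} The bootstrap ``$p_t\in C_b^1$ for all $t>0$ plus Chapman--Kolmogorov $\Rightarrow p_t\in C^{k+1}$'' does not work: writing $p_t=p_{t/2}\ast p_{t/2}$ you can put one derivative on one factor, $\partial_i p_t=(\partial_i p_{t/2})\ast p_{t/2}$, but a second differentiation would require $\partial_i p_{t/2}\in L^1$ (or a second derivative of $p_{t/2}$), and \eqref{C1} gives neither: $\partial_i p_{t/2}$ is merely bounded, and the convolution $(\partial_i p_{t/2})\ast(\partial_j p_{t/2})$ of two bounded, not necessarily integrable functions is not even defined. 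This is precisely the pitfall the paper flags at the start of Section~\ref{reg} (``the decay of $p_t$ does not necessarily carry over to its derivatives''); weighted $L^1$-bounds on derivatives of $p_t$ are established there only for subordinated Brownian motions (Proposition~\ref{p-7}), via the special radial identity \eqref{p-eq75}. \textbf{(ii)} Even granting smoothness and finiteness of $I_\alpha(t)$, your central claim $I_\alpha(t)\to0$ as $t\to\infty$ is \emph{false} under the hypotheses of the theorem. Take $d=1$ and $X_t=bt+B_t$ a Brownian motion with drift $b\neq0$ (so \eqref{C1} holds and \eqref{C2} holds for every $\beta$), and let $\gamma\in(\tfrac12,1)$, so $k=\floor{\gamma}=0$ and $|\alpha|=1$. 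Then $p_t(y)=(2\pi t)^{-1/2}e^{-(y-bt)^2/(2t)}$, and the substitution $y=bt+\sqrt{t}\,z$ gives
\begin{equation*}
	I_\alpha(t)=\int_{\mbb{R}}(1+|y|^{\gamma})\,|p_t'(y)|\,dy
	=\frac{1}{\sqrt{2\pi t}}\int_{\mbb{R}}\bigl(1+|bt+\sqrt{t}\,z|^{\gamma}\bigr)|z|e^{-z^2/2}\,dz
	\geq c\,|b|^{\gamma}\,t^{\gamma-\frac12}\xrightarrow[t\to\infty]{}\infty.
\end{equation*}
The Fourier heuristic misses this because the drift makes $e^{-t\psi(\xi)}$ oscillate like $e^{itb\xi}$: the bulk of $p_t$ sits near $y\approx tb$, where the weight $|y|^{\gamma}$ contributes a factor $t^{\gamma}$ that overwhelms the smoothing decay $t^{-|\alpha|/2}$ whenever $|\alpha|\leq 2\gamma$. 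So the final step ``let $t\to\infty$ to force $\partial^{\alpha}u=0$'' collapses already in this elementary example, which the theorem nevertheless covers.

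The paper's proof is structured exactly to avoid both issues: it never differentiates $P_tu$ and never sends $t\to\infty$. Instead it works with the \emph{fixed} time $t=1$ and exploits spatial scaling: Lemma~\ref{p-5} (which needs only $p_1\in C_b^1$ and $\mbb{E}|X_1|^{\beta}<\infty$) yields the H\"older-type bound $|u(rx'+rh')-u(rx')|\leq CMr^{\gamma}|h'|^{\varrho}$ with $\varrho=(\beta-\gamma)/(d+\beta)$; applying this iteratively to the finite differences $\Delta_h^j u$ (each of which again satisfies $P_1\Delta_h^ju=\Delta_h^ju$ by translation invariance) lowers the growth exponent by $\varrho$ per step, until $\Delta_h^{k+1}u$ vanishes at infinity; the Choquet--Deny argument (Proposition~\ref{p-9}) then kills it, and a mollification argument converts ``$\Delta_h^{k+1}u\equiv0$'' into ``$u$ is a polynomial''. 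If you want to salvage your strategy, you would need to replace the limit $t\to\infty$ by a mechanism that is insensitive to the translation of mass (drift), which is essentially what the paper's combination of spatial rescaling and Choquet--Deny accomplishes.
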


\begin{bem} \label{main-5} \begin{enumerate}[wide, labelwidth=!, labelindent=0pt]
	\item Weak solutions to $Au=0$ are only determined up to a Lebesgue null set, cf.\ Section~\ref{weak}. When we write ``$u$ is a polynomial'', this means that $u$ has a representative which is a polynomial, i.e.\ there is a polynomial $\tilde{u}$ such that $u=\tilde{u}$ Lebesgue almost everywhere.
	\item If $(X_t)_{t \geq 0}$ is a  Brownian motion, then \eqref{C1} is trivial and \eqref{C2} holds for all $\beta>0$; consequently, we recover the classical Liouville theorem for the Laplacian. 
	\item A sufficient condition for \eqref{C1} is the Hartman--Wintner condition, \begin{equation*}
		\lim_{|\xi| \to \infty} \frac{\re \psi(\xi)}{\log(|\xi|)}=\infty,
	\end{equation*}
	see \cite{knop13} for a thorough discussion. 
	\item Condition~\eqref{C2} is equivalent to assuming that $\mbb{E}(|X_t|^{\beta})=\int_{\mbb{R}^d} |x|^{\beta} p_t(x) \, dx$ is finite for some (all) $t>0$, cf.\ \cite{sato}. Consequently, \eqref{C2} implies, in particular, that $P_t u(x)=\mbb{E}u(x+X_t)$ is well defined for any measurable function $u$ satisfying the growth condition $|u(x)| \leq M(1+|x|^{\beta})$.
	\item The conditions \eqref{C1} and \eqref{C2} are quite mild assumptions, which hold for a large class of pseudo-differential operators. The recent paper \cite{knop19} does, however, indicate that our conditions are not sharp; it is shown that $A=-\psi(D)$ has the Liouville property \eqref{liou-p} iff $\{\psi=0\}=\{0\}$. By the Riemann--Lebesgue lemma, \eqref{C1} implies $\{\psi=0\}=\{0\}$ but the converse is not true.
\end{enumerate} \end{bem}

Let us sketch the idea of the proof of Theorem~\ref{main-3}. First, we show under mild assumptions that every weak solution to the equation $Au=0$ gives rise to a (continuous) solution to the convolution equation $P_t u=u$. The intuition behind this result comes from Dynkin's formula: If $Au=0$ and $u$ is, say, twice differentiable and bounded, then Dynkin's formula, cf.\  \cite[Lemma 4.1.14]{jacob1}, shows \begin{equation*}
	P_t u - u = \int_0^t P_s Au \, ds = 0 \fa t \geq 0.
\end{equation*}
Secondly, we use that the convolution operator $P_t$ has smoothing properties, i.e.\ $P_t u$ has a higher regularity than $u$. If $u$ is a solution to $Au=0$, and hence to $P_t u=u$, then these regularizing properties of $P_t$ allow us to establish suitable H\"older estimates for $u$ which lead, by iteration, to the conclusion that $u$ is smooth; thus a polynomial. \par \medskip

The remaining article is structured as follows. In Section~\ref{weak} we introduce the notion of weak solutions and study the connection between the ``Laplace'' equation $Au=0$ and the convolution equation $P_t u=u$. In Section~\ref{reg} we establish regularity estimates for the semigroup $(P_t)_{t \geq 0}$, which are of independent interest. The Liouville theorem is proved in Section~\ref{proof}.

\section{Weak solutions} \label{weak}

Let $A=-\psi(D)$ be a pseudo-differential operator with continuous negative definite symbol $\psi:\mbb{R}^d \to \mbb{C}$, cf.\ \eqref{char-exp}. Since $\overline{\psi(\xi)}=\psi(-\xi)$ for all $\xi \in \mbb{R}^d$, an application of Plancherel's theorem shows that the pseudo-differential operator $A^* f := - \overline{\psi}(D)f$ is the adjoint of $A$ in $L^2(dx)$. Indeed, if $\varphi,f \in C_c^{\infty}(\mbb{R}^d)$, then \begin{align*}
	\sprod{Af,\varphi}_{L^2}
	= \sprod{\widehat{Af},\check{\varphi}}_{L^2}
	= \sprod{- \psi \hat{f},\check{\varphi}}_{L^2}
	= \sprod{\hat{f}, -\psi \check{\varphi}}_{L^2}
	= \sprod{\hat{f},\widecheck{A^* \varphi}}_{L^2}
	= \sprod{f,A^* \varphi}_{L^2},
\end{align*}
where $\check{\varphi}$ denotes the inverse Fourier transform of $\varphi$.

\begin{defn}
	Let $A$ be a pseudo-differential operator with continuous negative definite symbol $\psi: \mbb{R}^d \to\mbb{C}$. Let $U \subseteq \mbb{R}^d$ be open and $f \in L^1_{\loc}(U)$. A measurable function $u: \mbb{R}^d \to \mbb{R}$ is a weak solution to \begin{equation*}
		Au=f \quad \text{in $U$}
	\end{equation*}
	if \begin{equation}
		\forall \varphi \in C_c^{\infty}(U)\::\: \int_{\mbb{R}^d} u(x) A^* \varphi(x) \, dx = \int_U f(x) \varphi(x) \, dx. \label{eq-weak}
	\end{equation}
\end{defn}

In \eqref{eq-weak} we implicitly assume that the integrals exist. For the integral on the right-hand side, the existence is evident from $\varphi \in C_c^{\infty}(U)$ and $f \in L_{\loc}^1(U)$. The other integral is harder to deal with because $A^*$ is a non-local operator, i.e.\ decay properties of $\varphi$ (e.g.\ compactness of the support) do not carry over to $A^* \varphi$. Our first result in this section shows that the decay of $A^* \varphi$ is closely linked to the existence of fractional moments $\int_{|y| \geq 1} |y|^{\beta} \, \nu(dy)$ of the L\'evy measure $\nu$, associated with $\psi$ via \eqref{char-exp}; see \cite[Lemma 2.1]{fall16} for a related result.

\begin{prop} \label{p-1}
	Let $\psi:\mbb{R}^d \to \mbb{C}$ be a continuous negative definite function with triplet $(b,Q,\nu)$. If $\beta>0$ is such that $\int_{|y| \geq 1} |y|^{\beta} \, \nu(dy)<\infty$, then the pseudo-differential operator $A=-\psi(D)$ satisfies \begin{equation}
		\int_{\mbb{R}^d} (1+|x|^{\beta}) |A\varphi(x)| \, dx < \infty \fa  \varphi \in C_c^{\infty}(\mbb{R}^d). \label{p-eq6}
	\end{equation}
	More precisely, there exists for all $R>0$ a constant $C>0$ such that every $\varphi \in C_c^{\infty}(\mbb{R}^d)$ with $\spt \varphi \subset B(0,R)$ satisfies \begin{equation}
		\int_{\mbb{R}^d} (1+|x|^{\beta}) |A\varphi(x)| \, dx \leq C \|\varphi\|_{C_b^2(\mbb{R}^d)} \left( |b| + |Q| + \int_{|y| \leq 1} |y|^2 \, \nu(dy) + \int_{|y|>1} |y|^{\beta} \, \nu(dy) \right). \label{p-eq7}
	\end{equation}
\end{prop}
Let us mention that $\int_{|y| \geq 1} |y|^{\beta} \, \nu(dy) < \infty$ is actually equivalent to \eqref{p-eq6}. Here, we need (and prove) only sufficiency for \eqref{p-eq6}; for the converse implication see \cite[Theorem 4.1]{hulanicki}. \par
Proposition~\ref{p-1} gives a sufficient condition such the integral on the left-hand side of \eqref{eq-weak} exists: Since the adjoint $A^*$ is a pseudo-differential operator with symbol $\overline{\psi}$ and triplet $(-b,Q,\nu(-\cdot))$, Proposition~\ref{p-1} shows that $\int_{\mbb{R}^d} |u(x)|\, |A^* \varphi(x)| \, dx$ is finite for every measurable function $u$ satisfying $|u(x)| \leq M(1+|x|^{\beta})$ for some $\beta \geq 0$ with $\int_{|y| \geq 1} |y|^{\beta} \, \nu(dy)<\infty$. 

\begin{proof}[Proof of Proposition~\ref{p-1}]
	Since the assertion is obvious for the local part of $A$, we may assume without loss of generality that $b=0$ and $Q=0$. Fix $\varphi \in C_c^{\infty}(\mbb{R}^d)$ with $\spt \varphi \subset B(0,R)$. For $x \in \mbb{R}^d$ with $|x| \geq 2R$, we have
	\begin{align*}
		|x|^{\beta} |A\varphi(x)|
		\leq |x|^{\beta} \int_{|y+x|<R} |\varphi(x+y)| \, \nu(dy) 
		&\leq \int_{|y| \geq |x|-R} |\varphi(x+y)| \frac{|x|^{\beta}}{(|x|-R)^{\beta}} |y|^{\beta} \, \nu(dy) \\
		&\leq C \int_{|y| \geq R} |\varphi(x+y)| \, |y|^{\beta} \, \nu(dy)
	\end{align*}
	for some constant $C=C(R)$. Integrating with respect to $x$, we find by Tonelli's theorem that
	\begin{align*}
		\int_{|x| \geq 2R} |x|^{\beta} |A \varphi(x)| \, dx
		\leq C \|\varphi\|_{\infty} (2R)^d \int_{|y| \geq R} |y|^{\beta} \, \nu(dy).
	\end{align*}
	On the other hand, it is immediate from Taylor's formula that \begin{equation*}
		\|A\varphi\|_{\infty} \leq 2 \|\varphi\|_{C_b^2(\mbb{R})} \int_{y \neq 0} \min\{1,|y|^2\} \, \nu(dy),
	\end{equation*}
	and this yields the required estimate for $\int_{|x| <2R} (1+|x|^{\beta}) |A\varphi(x)| \, dx$.
\end{proof}

Next we establish a connection between the ``Laplace'' equation $Au=0$ and the convolution equation $P_t u=u$. 

\begin{lem} \label{p-3}
	Let $(X_t)_{t \geq 0}$ be a L\'evy process with L\'evy triplet $(b,Q,\nu)$, infinitesimal generator $(A,\mc{D}(A))$ and semigroup $(P_t)_{t \geq 0}$. Assume that $X_t$ has for $t>0$ a density $p_t \in C_b(\mbb{R}^d)$ with respect to Lebesgue measure, and let $\beta \geq 0$ be such that $\int_{|y| \geq 1} |y|^{\beta} \, \nu(dy)<\infty$. 
	If $u:\mbb{R}^d \to \mbb{R}$ is a measurable function with $|u(x)| \leq M(1+|x|^{\beta})$, $x \in \mbb{R}^d$, solving \begin{equation*}
		Au=0 \quad \text{weakly in $\mbb{R}^d$},
	\end{equation*}
	then there exists $\tilde{u} \in C(\mbb{R}^d)$ such that $u=\bar{u}$ Lebesgue almost everywhere and $\tilde{u} = P_t \tilde{u}$  for all $t>0$.
\end{lem}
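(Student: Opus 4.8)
The plan is to regularise the weak solution by mollification, show that the mollified function is a genuine (classical) solution of $Au_\eps=0$, invoke a Dynkin-type argument to deduce the convolution identity $P_t u_\eps = u_\eps$ at the smooth level, and then pass to the limit $\eps \to 0$ to recover $P_t u = u$ almost everywhere. The continuous representative is then produced by exploiting the smoothing of a single application of $P_t$. Fix a standard mollifier $\rho_\eps \in C_c^{\infty}(\mbb{R}^d)$ and set $u_\eps := u \ast \rho_\eps \in C^{\infty}(\mbb{R}^d)$. Because $|u(x)| \leq M(1+|x|^{\beta})$, the mollification obeys $|u_\eps(x)| \leq C(1+|x|^{\beta})$ with $C$ independent of $\eps$, and all derivatives of $u_\eps$ grow at most polynomially; moreover $u_\eps \to u$ Lebesgue-a.e.\ and in $L^1_{\loc}$ as $\eps \to 0$. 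Throughout, the moment assumption $\int_{|y|\geq 1}|y|^{\beta}\,\nu(dy)<\infty$ guarantees $\mbb{E}(|X_t|^{\beta})<\infty$, so that $P_t v(x)=\int_{\mbb{R}^d} v(x+y)p_t(y)\,dy$ is well defined for every measurable $v$ with the above growth.

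The first substantive step is to verify that $u_\eps$ is again a weak solution. For $\varphi \in C_c^{\infty}(\mbb{R}^d)$ and $z \in \mbb{R}^d$ the shifted test function $\varphi(\cdot+z)$ lies in $C_c^{\infty}(\mbb{R}^d)$, and since $A^*$ is a translation-invariant Fourier multiplier we have $A^*(\varphi(\cdot+z))=(A^*\varphi)(\cdot+z)$; the weak equation therefore gives $\int_{\mbb{R}^d} u(w)(A^*\varphi)(w+z)\,dw=0$ for every $z$. Writing $\int_{\mbb{R}^d} u_\eps\, A^*\varphi\,dx$, interchanging the $z$- and $x$-integrations (the required absolute integrability follows from the decay estimate for $A^*\varphi$ in Proposition~\ref{p-1}, applied to the adjoint triplet $(-b,Q,\nu(-\cdot))$, together with the polynomial growth of $u$ and the compact support of $\rho_\eps$), and substituting, one obtains $\int_{\mbb{R}^d} u_\eps\, A^*\varphi\,dx = \int_{\mbb{R}^d}\rho_\eps(z)\Big(\int_{\mbb{R}^d} u(w)(A^*\varphi)(w+z)\,dw\Big)\,dz = 0$. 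Thus $u_\eps$ solves $Au_\eps=0$ weakly. As $u_\eps$ is smooth with polynomially bounded derivatives, the integration-by-parts identity $\int_{\mbb{R}^d}(Au_\eps)\varphi\,dx=\int_{\mbb{R}^d}u_\eps\,A^*\varphi\,dx$ holds (this is the self-adjointness computation from the start of this section, extended to a polynomially growing first argument by Fubini and Proposition~\ref{p-1}), and since $Au_\eps$ is continuous we conclude $Au_\eps\equiv 0$ pointwise.

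Next I would upgrade the classical equation to the convolution identity. For a smooth function with at most polynomial growth of order $\beta$ and $\mbb{E}(|X_t|^{\beta})<\infty$, differentiation under the expectation (Taylor expansion in the generator together with dominated convergence, controlled by the $\beta$-moment) yields $\tfrac{d}{dt}P_t u_\eps = P_t(Au_\eps)=0$, whence $P_t u_\eps = u_\eps$ for all $t>0$; this is the rigorous form of the Dynkin heuristic recalled in the introduction. Letting $\eps \to 0$, the left-hand side converges pointwise, $P_t u_\eps(x)=\int u_\eps(x+y)p_t(y)\,dy \to P_t u(x)$ for every $x$ by dominated convergence (dominating function $C(1+|x+y|^{\beta})p_t(y)$), while $u_\eps(x)\to u(x)$ for a.e.\ $x$. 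Hence $P_t u = u$ Lebesgue-a.e.\ for each $t>0$.

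It remains to extract a continuous representative. A direct estimate shows $P_t u$ is continuous: splitting $P_t u(x+h)-P_t u(x)=\int u(w)\big(p_t(w-x-h)-p_t(w-x)\big)\,dw$ into a bounded region, where one uses uniform continuity of $p_t \in C_b(\mbb{R}^d)$, and a tail, where one uses the uniform integrability of $\{|u(x+X_t)|\}$ afforded by the $\beta$-moment, gives continuity on compacta. Define $\tilde u := P_1 u \in C(\mbb{R}^d)$; then $\tilde u = u$ a.e.\ by the previous step. Using the semigroup property $P_t P_1 = P_{1+t}$ (valid on this growth class by Chapman--Kolmogorov and Fubini) and $P_{1+t}u=u$ a.e., we get $P_t \tilde u = \tilde u$ a.e.; as both sides are continuous, the identity $P_t\tilde u=\tilde u$ holds everywhere, completing the proof.

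I expect the main obstacle to be the passage from the weak to the pointwise equation for $u_\eps$, i.e.\ justifying the two Fubini interchanges and the integration-by-parts identity for a smooth but unbounded (polynomially growing) function paired against $A^*\varphi$; this is exactly the point where Proposition~\ref{p-1} and the moment condition \eqref{C2} are indispensable. A secondary technical point is the Dynkin-type step $\tfrac{d}{dt}P_tu_\eps=P_t Au_\eps$ for polynomially growing $u_\eps$, which likewise relies on the finiteness of the $\beta$-th moment.
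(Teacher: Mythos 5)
Your skeleton---mollify $u$, prove $P_t u_{\eps}=u_{\eps}$, let $\eps \to 0$, and take $\tilde{u}:=P_1 u$ as the continuous representative---is the same as the paper's, and several of your steps are sound: mollification preserves the weak equation (translation invariance of $A^*$ plus Fubini, justified by Proposition~\ref{p-1}), the passage to the limit via Lebesgue points and dominated convergence (in fact slightly cleaner than the paper's $L^1_{\loc}$ splitting), and the construction of $\tilde{u}$. Where you genuinely depart from the paper is the middle step. The paper never applies the generator to $u_{\eps}$: it writes $P_t u_{\eps}(x)-u_{\eps}(x)=\int u(z)\left(\mbb{E}\varphi_{\eps}(x-z+X_t)-\varphi_{\eps}(x-z)\right)dz$, applies Dynkin's formula only to the compactly supported mollifier $\varphi_{\eps}\in C_c^{\infty}(\mbb{R}^d)$---where it is a standard, citable fact \cite[Lemma 4.1.14]{jacob1}---and then kills the resulting term by feeding the test functions $z \mapsto \varphi_{\eps}(x+X_s(\omega)-z)\in C_c^{\infty}(\mbb{R}^d)$ into the weak equation. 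You instead upgrade $u_{\eps}$ to a pointwise solution $Au_{\eps}\equiv 0$ (this part is justifiable, though not by ``extending'' the Plancherel computation, which needs $L^2$; one must redo it with the L\'evy--Khintchine representation and Fubini) and then apply a Dynkin-type identity to $u_{\eps}$ itself.

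That last step is a genuine gap. The identity $\frac{d}{dt}P_t u_{\eps}=P_t(Au_{\eps})$ is standard only for functions in the domain of the generator (or in $C_b^2$); your $u_{\eps}$ grows like $|x|^{\beta}$ while only $\beta$ moments of $X_t$ are available, and ``Taylor expansion plus dominated convergence controlled by the $\beta$-moment'' is not a proof in this regime. Two concrete failure points: (i) the pointwise convergence $h^{-1}\left(P_h u_{\eps}(y)-u_{\eps}(y)\right)\to Au_{\eps}(y)$ for an unbounded smooth function is itself an extension theorem that must be proved, and the uniform-in-$h$ dominating function you would need for dominated convergence is essentially equivalent to the Dynkin bound you are trying to establish, so the argument as sketched is circular; (ii) the natural rigorous route, It\^o's formula, produces the compensated small-jump integral $\int_0^t\int_{|y|<1}\left(u_{\eps}(x+X_{s-}+y)-u_{\eps}(x+X_{s-})\right)\tilde{N}(ds,dy)$, and since $\nabla u_{\eps}=u\ast\nabla\rho_{\eps}$ also grows like $|x|^{\beta}$, the $L^2$-martingale property of this term would require $\mbb{E}\left(|X_s|^{2\beta}\right)<\infty$, i.e.\ $2\beta$ moments of $\nu$, which you do not have. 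The statement you want is true, but repairing it needs a further idea: e.g.\ stop at $\tau_n:=\inf\{t\colon |X_t|>n\}$ so that the stopped integrands are controlled, and then let $n\to\infty$ using the maximal moment estimate $\mbb{E}\left(\sup_{s\leq t}|X_s|^{\beta}\right)<\infty$ from \cite[Theorem 25.18]{sato}; or cut off, $u_{\eps,n}:=\chi_n u_{\eps}\in C_c^{\infty}(\mbb{R}^d)$, apply standard Dynkin there, and control the error $A u_{\eps,n}-Au_{\eps}$ in the limit. Either repair is a substantial piece of analysis---it is the main technical content of your route---and the paper's detour through the test function is designed precisely to avoid it.
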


Note that the exceptional null set $\{\tilde{u} \neq P_t \tilde{u}\}$ does, in general, depend on $t$; for the application which we have in mind, that is, for the proof of Liouville's theorem, this is not a problem since we will use the result only for $t=1$.

\begin{proof}
	Take $\varphi \in C_c^{\infty}(\mbb{R}^d)$ such that $\varphi \geq 0$ and $\int_{\mbb{R}^d} \varphi(x) \, dx=1$. Set $\varphi_{\eps}(x):=\eps^{-d} \varphi(x/\eps)$ and \begin{equation*}
		u_{\eps}(x):=(u \ast \varphi_{\eps})(x) := \int_{\mbb{R}^d} u(x-y) \varphi_{\eps}(y) \, dy, \qquad x \in \mbb{R}^d,
	\end{equation*}
	for $\eps>0$.  Using \begin{equation*}
		(a+b)^{\beta} \leq c_{\beta} (a^{\beta}+b^{\beta}), \qquad a,b \geq 0,
	\end{equation*}
	it follows that \begin{align}
		|u_{\eps}(x)|
		\leq M \int_{\mbb{R}^d} (1+|x-y|^{\beta}) |\varphi_{\eps}(y) \, dy 
		&\leq M c_{\beta} (1+|x|^{\beta}) \left(\int_{\mbb{R}^d} |\varphi_{\eps}(y)| \, dy + \int_{\mbb{R}^d} |y|^{\beta} |\varphi_{\eps}(y)| \, dy \right) \notag \\
		&\leq C_1 (1+|x|^{\beta}) \label{p-st3}
	\end{align}
	for some constant $C_1>0$ which does not depend on $\eps$, $x$ and $u$. 
	As $\int_{|y| \geq 1} |y|^{\beta} \, \nu(dy)<\infty$, the L\'evy process has fractional moments of order $\beta$, i.e. $\mbb{E}(|X_t|^{\beta})=\int |y|^{\beta} p_t(y) \, dy<\infty$, see e.g.\ \cite[Theorem 25.3]{sato} or \cite[Theorem 4.1]{moments}, and so $P_t u$ and $P_t u_{\eps}$ are well-defined. We have \begin{align*}
		|P_t u(x)-P_t u_{\eps}(x)| \leq \|p_t\|_{\infty} \int_{|y| \leq R} |u(y)-u_{\eps}(y)| \, dy + 2M \int_{|y|>R} (1+|y|^{\beta}) p_t(y-x) \, dy.
	\end{align*}
	For fixed $x \in \mbb{R}^d$, it follows from the dominated convergence theorem that the second term on the right-hand side is less than, say, $\varrho>0$, for $R$ large enough. Since $u_{\eps} \to u$ in $L^1_{\loc}(dx)$, the first term is less than $\varrho$ for small $\eps>0$. Hence, $P_t u_{\eps}(x) \to P_t u(x)$ as $\eps \to 0$ for each $x \in \mbb{R}^d$. Next we show that \begin{equation}
		P_t u_{\eps}(x) = u_{\eps}(x) \fa t>0, \, x \in \mbb{R}^d,\, \eps>0. \label{p-eq31}
	\end{equation}
	By the definition of $P_t u$ and $u_{\eps}$,we have \begin{align*}
		P_t u_{\eps}(x) = \int_{\mbb{R}^d} \left( \int_{\mbb{R}^d} u(z) \varphi_{\eps}(y-z) \, dz\right)   p_t(y-x) \, dy.
	\end{align*}
	Because of the growth estimate in \eqref{p-st3}, we may apply Fubini's theorem: \begin{align*}
		P_t u_{\eps}(x)
		&= \int_{\mbb{R}^d} \left( \int_{\mbb{R}^d} \varphi_{\eps}(y-z) p_t(y-x) \, dy \right) u(z) \, dz \\
		&= u_{\eps}(x) + \int_{\mbb{R}^d} u(z) \left(\mbb{E}\varphi_{\eps}(x-z+X_t) - \varphi_{\eps}(x-z)\right) \, dz
		=: u_{\eps}(x) + \Delta.
	\end{align*}
	It remains to show that $\Delta=0$. As $\varphi_{\eps} \in C_c^{\infty}(\mbb{R}^d)$, an application of Dynkin's formula gives \begin{align*}
		\Delta = \int_{\mbb{R}^d} u(z) \int_0^t \mbb{E}((A\varphi_{\eps})(x-z+X_s)) \, ds \, dz.
	\end{align*}
	Applying Lemma~\ref{p-1}, using the growth condition on $u$ and the fact that $\int_0^t \mbb{E}(|X_s|^{\beta}) \, ds <\infty$, cf.\ \cite[Theorem 25.18]{sato} or \cite[Theorem 4.1]{moments}, we find that \begin{equation*}
		\mbb{E}\left( \int_0^t  \int_{\mbb{R}^d} |u(z+X_s)| \, |(A\varphi_{\eps})(x-z)| \, dz \, ds \right)< \infty,
	\end{equation*}
	and therefore we may apply once more Fubini's theorem: \begin{align*}
		\Delta 
		= \mbb{E}\left(\int_0^t \int_{\mbb{R}^d} (A\varphi_{\eps})(x-z+X_s) u(z) \, dz \, ds \right).
	\end{align*}
	From \begin{equation*}
		(A \phi)(y-z) = (A\phi(\bullet+y))(-z) \quad \text{and} \quad (A\phi)(-z), = (A^* \phi(-\bullet))(z).
	\end{equation*}
	we conclude that \begin{equation*}
		\Delta = \mbb{E} \left( \int_0^t \int_{\mbb{R}^d} (A^* \varphi_{\eps}(x+X_s-\bullet))(z) u(z) \, dz \,ds \right).
	\end{equation*}
	Since $z \mapsto \varphi_{\eps}(x+X_s(\omega)-z) \in C_c^{\infty}(\mbb{R}^d)$ for each fixed $\omega \in \Omega$, $s \in [0,t]$ and $x \in \mbb{R}^d$, it follows from $Au=0$ weakly that the inner integral on the right-hand side is zero, and so $\Delta=0$. This finishes the proof of \eqref{p-eq31}. As $u_{\eps} \to u$ in $L_1^{\loc}$, there exists a subsequence converging Lebesgue almost everywhere. Letting $\eps \to 0$ in \eqref{p-eq31} along this subsequence, we get $P_t u=u$ Lebesgue almost everywhere. If we set $\tilde{u} := P_1 u$, then $u=P_1 u = \tilde{u}$ Lebesgue almost everywhere and \begin{equation*}
		\tilde{u} = u = P_t u = P_t \tilde{u} \quad \text{a.e.}
	\end{equation*}
	where the latter equality follows from the fact that $P_t$ does not see Lebesgue null sets since $X_t$ has a density with respect to Lebesgue measure. Finally, we note that $\tilde{u} \in C(\mbb{R}^d)$. Indeed, given $\eps>0$ and $r>0$, there is some $R>r$ such that \begin{equation*}
		\sup_{x \in B(0,r)} \int_{|y| \geq R}(1+|y|^{\beta}) p_1(y-x) \, dy= \sup_{x \in B(0,r)} \int_{|y| \geq R}(1+|y+x|^{\beta}) p_1(y) \, dy  \leq \epsilon.
	\end{equation*}
	Hence, for all $x,z \in B(0,r)$ \begin{align*}
		|\tilde{u}(x)-\tilde{u}(z)|
		&\leq \int_{|y| \leq R} |u(y)| \, |p_1(y-x)-p_1(y-z)| \, dy + \int_{|y| \geq R} |u(y)| \, |p_1(y-x)-p_1(y-z)| \, dy \\
		&\leq M(1+R^{\beta}) R^d \sup_{\substack{|u-v| \leq |x-z| \\ u,v \in B(0,2R)}} |p_1(u)-p_1(v)| + 2M \epsilon 
		\xrightarrow[]{|x-z| \to 0} 2M \epsilon \xrightarrow[]{\eps \to 0} 0,
	\end{align*}
	i.e. $\tilde{u}$ is continuous. Since $\tilde{u}$ and $P_t \tilde{u}$ are continuous, it follows from $\tilde{u} = P_t \tilde{u}$ Lebesgue almost everywhere that $\tilde{u}(x) = P_t \tilde{u}(x)$ for all $x \in \mbb{R}^d$. 
\end{proof}

\section{Regularity estimates for semigroups associated with L\'evy processes} \label{reg}

Let $(X_t)_{t \geq 0}$ be a L\'evy process with transition density $p_t$, $t>0$, and semigroup \begin{equation*}
	P_t u(x) := \mbb{E}u(x+X_t) = \int_{\mbb{R}^d} u(x+y) p_t(y) \, dy, \qquad t>0, \, x \in \mbb{R}^d.
\end{equation*}
If $u:\mbb{R}^d \to \mbb{R}$ is bounded and Borel measurable, then $P_t u$ is continuous, being convolution of a bounded function with an integrable function, cf.\ \cite[Theorem 15.8]{mims}. In this section, we study the regularity of $x \mapsto P_t u(x)$ for \emph{unbounded} functions $u$. If $u$ is unbounded, then we need some assumptions to make sense of the integral appearing in the definition of $P_t u$. It is natural to assume that there exists a constant $\beta>0$ such that the associated L\'evy measure $\nu$ satisfies $\int_{|y| \geq 1} |y|^{\beta} \, \nu(dy)<\infty$. This condition ensures that $\mbb{E}(|X_t|^{\beta})<\infty$ for all $t \geq 0$, cf.\ Sato \cite{sato}, and so $P_t u$ is well-defined for any function $u$ satisfying $|u(x)| \leq M(1+|x|^{\beta})$, $x \in \mbb{R}^d$, for some $M>0$. Under the assumption that $p_t \in C_b^1(\mbb{R}^d)$, we will show that $P_t u$ is locally H\"older continuous for every function $u$ satisfying $|u(x)| \leq M(1+|x|^{\gamma})$, $x \in \mbb{R}^d$, for some $\gamma<\beta$. Before stating the result, let us give a word of caution.  As \begin{equation*}
	P_t u(x) = \int_{\mbb{R}^d} u(y) p_t(y-x) \, dy,
\end{equation*}
a naive differentiation yields \begin{equation*}
	\nabla P_t u(x) =- \int_{\mbb{R}^d} u(y) \nabla p_t(y-x) \, dy,
\end{equation*}
and therefore one might suspect that $P_t u$ is differentiable (and not only locally H\"older continuous). In general, it is not possible to make this calculation rigorous, even if $u$ is bounded. To start with, it is not clear that the integral $\int_{\mbb{R}^d}|u(y)| \, |\nabla p_t(y-x)| \, dy$ is finite since the decay of $p_t$ does not necessarily carry over to its derivatives. However, there is an interesting -- and wide -- class of L\'evy processes for which the above reasoning can be made rigorous, and we will work out the details in the second part of this section.

\begin{lem} \label{p-5}
	Let $(X_t)_{t \geq 0}$ be a L\'evy process with L\'evy triplet $(b,Q,\nu)$ and semigroup $(P_t)_{t \geq 0}$. Let $\beta>0$ be such that $\int_{|y| \geq 1} |y|^{\beta} \, \nu(dy)<\infty$, and assume that $X_t$ has for some $t>0$ a density $p_t \in C_b^1(\mbb{R}^d)$ with respect to Lebesgue measure. If $u$ is a measurable function satisfying $|u(x)| \leq M (1+|x|^{\gamma})$ for some $M>0$ and $\gamma \in [0,\beta)$, then \begin{equation}
		|P_t u(rx+rh)-P_t u(rx)| \leq C M r^{\gamma} |h|^{\varrho}, \qquad |x|,|h| \leq 1, \, r \geq 1, \label{p-eq51}
	\end{equation}
	where $\varrho:=\frac{\beta-\gamma}{d+\beta} \in (0,1)$ and $C=C(t,\beta)<\infty$ is a constant which does not depend on $u$. In particular, $x \mapsto P_t u(x)$ is H\"older continuous of order $\varrho$ on any compact set $K \subseteq \mbb{R}^d$ and \begin{equation*}
		\|P_t u\|_{C_b^{\varrho}(B(0,r))} \leq (C+2) M r^{\gamma} \fa r \geq 1.
	\end{equation*}
\end{lem}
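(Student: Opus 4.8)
The assertion is an increment bound, so I would first reduce it to one. Writing $\xi:=rx$ and $\xi':=rx+rh$, both points lie in $\overline{B(0,2r)}$ and $|\xi'-\xi|=r|h|$; hence it suffices to prove, for arbitrary $\xi\in\mbb{R}^d$ with $|\xi|\le r$ and any displacement $\eta\in\mbb{R}^d$,
\[
	|P_tu(\xi+\eta)-P_tu(\xi)|\le CMr^\gamma|\eta|^\varrho ,
\]
which is \eqref{p-eq51} with $\xi=rx$, and from which the final ``in particular'' follows upon dividing by $|\eta|^\varrho$. Using $P_tu(\xi)=\int u(y)p_t(y-\xi)\,dy$ and substituting $y=\xi+z$, the increment equals $\int u(\xi+z)[p_t(z-\eta)-p_t(z)]\,dz$. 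Since $|\xi|\le r$, $r\ge1$ and $(a+b)^\gamma\le 2^\gamma(a^\gamma+b^\gamma)$, the growth hypothesis yields $|u(\xi+z)|\le CM(r^\gamma+|z|^\gamma)$, so the increment is controlled by $CM(r^\gamma J_0+J_\gamma)$ with
\[
	J_0:=\int_{\mbb{R}^d}|p_t(z-\eta)-p_t(z)|\,dz,\qquad J_\gamma:=\int_{\mbb{R}^d}|z|^\gamma|p_t(z-\eta)-p_t(z)|\,dz .
\]

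The two structural inputs are the $C_b^1$-regularity of $p_t$, giving the Lipschitz bound $|p_t(z-\eta)-p_t(z)|\le\|\nabla p_t\|_\infty|\eta|$ (this is exactly where the $C_b^1$-assumption enters), and the finiteness of $\int|z|^\beta p_t(z)\,dz$, which follows from \eqref{C2} and controls the tails of $p_t$. The core of the proof is to interpolate these two bounds by truncation. For a radius $\rho\ge\max\{1,2|\eta|\}$ I split each integral over $\{|z|\le\rho\}$ and $\{|z|>\rho\}$: on the inner part the Lipschitz bound contributes $C|\eta|\rho^{d}$ to $J_0$ and $C|\eta|\rho^{d+\gamma}$ to $J_\gamma$, while on the outer part the triangle inequality together with the moment bound gives $\int_{|z|>\rho}|z|^{s}p_t(z)\,dz\le\rho^{s-\beta}\int|z|^\beta p_t(z)\,dz\le C\rho^{s-\beta}$ for $s\in\{0,\gamma\}$ (the shifted contribution $\int_{|z|>\rho}|z|^{s}p_t(z-\eta)\,dz$ is estimated identically, using that $|z|>\rho\ge2|\eta|$ forces $|z-\eta|>\rho/2$ and $|z|\le2|z-\eta|$). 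This yields
\[
	J_0\le C(|\eta|\rho^{d}+\rho^{-\beta}),\qquad J_\gamma\le C(|\eta|\rho^{d+\gamma}+\rho^{\gamma-\beta}).
\]
In both cases the optimal choice is $\rho\sim|\eta|^{-1/(d+\beta)}$, which produces $J_0\le C|\eta|^{\beta/(d+\beta)}$ and $J_\gamma\le C|\eta|^{(\beta-\gamma)/(d+\beta)}=C|\eta|^\varrho$.

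It remains to assemble these and to treat the two displacement regimes. For $|\eta|\le1$ one has $\beta/(d+\beta)\ge\varrho$, so $r^\gamma J_0\le Cr^\gamma|\eta|^{\beta/(d+\beta)}\le Cr^\gamma|\eta|^\varrho$ and $J_\gamma\le C|\eta|^\varrho\le Cr^\gamma|\eta|^\varrho$, which is the desired bound. For $|\eta|>1$ it is immediate from the a priori estimate $\sup_{|\zeta|\le2r}|P_tu(\zeta)|\le CMr^\gamma$ (a direct consequence of $|u(x)|\le M(1+|x|^\gamma)$, $\mbb{E}|X_t|^\gamma<\infty$ and $r\ge1$), since then $|\eta|^\varrho\ge1$. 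Finally the ``in particular'' is read off: dividing the increment estimate by $|\eta|^\varrho$ bounds the $C^\varrho$-seminorm on $B(0,r)$ by $CMr^\gamma$, and combining with the same sup-bound gives $\|P_tu\|_{C_b^\varrho(B(0,r))}\le(C+2)Mr^\gamma$; Hölder continuity on a compact $K$ is the special case $r=\max\{1,\sup_{x\in K}|x|\}$.

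I expect the main obstacle to be the optimisation and bookkeeping of the truncation radius together with the verification that factoring the polynomial growth out as $r^\gamma$ does not degrade the Hölder exponent: the zeroth-order piece $r^\gamma J_0$ only carries the larger exponent $\beta/(d+\beta)$, and it is essential --- and true precisely because $|\eta|\le1$ and $\gamma\ge0$ --- that this is dominated by the target $r^\gamma|\eta|^\varrho$. The careful treatment of the tail of the shifted density $p_t(\cdot-\eta)$ (forcing $\rho\ge2|\eta|$) and the separation of the regime $|\eta|>1$ are the other, more routine, ingredients.
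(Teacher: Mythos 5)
Your reduction in the first paragraph is incorrect, and this is exactly where the proposal fails to deliver \eqref{p-eq51}. In the variables $\xi=rx$, $\eta=rh$ (so $|\xi|\le r$ and $|\eta|\le r$), inequality \eqref{p-eq51} reads
\begin{equation*}
	|P_t u(\xi+\eta)-P_t u(\xi)| \;\le\; C M r^{\gamma}\Bigl(\tfrac{|\eta|}{r}\Bigr)^{\varrho} \;=\; C M r^{\gamma-\varrho}\,|\eta|^{\varrho},
\end{equation*}
whereas the statement you reduce to, and then prove, is $|P_t u(\xi+\eta)-P_t u(\xi)|\le CMr^{\gamma}|\eta|^{\varrho}$ --- weaker by the factor $r^{\varrho}$. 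That factor is the entire point of the lemma: in the proof of Theorem~\ref{main-3} it is applied with $r=|x|$, $h'=h/r$ to get $|u(x+h)-u(x)|\le CM|x|^{\gamma-\varrho}|h|^{\varrho}$, i.e.\ a \emph{gain} in the growth exponent, which is what makes the iteration $\gamma\rightsquigarrow\gamma-\varrho\rightsquigarrow\gamma-2\varrho\rightsquigarrow\cdots$ run; your version returns $|u(x+h)-u(x)|\le CM|x|^{\gamma}|h|^{\varrho}$, with no gain. Nor can your estimates be pushed to the stronger form: with $\eta=rh$ they yield $CMr^{\gamma}|rh|^{\varrho}=CMr^{\gamma+\varrho}|h|^{\varrho}$.

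However, the deficiency is not in your analysis but in the statement: \eqref{p-eq51} as printed is false, and what you prove is the correct substitute. Take $d=1$, $X_t$ a Brownian motion (so \eqref{C2} holds for every $\beta>0$ and $p_t\in C_b^{\infty}$), $u=\I_{[0,\infty)}$, $\gamma=0$; then $P_tu(w)=\Phi(w/\sqrt{t})$ with $\Phi$ the standard normal distribution function, and for $x=0$, $h=1/r$ the left-hand side of \eqref{p-eq51} equals $\Phi(1/\sqrt{t})-\Phi(0)>0$, a constant, while the right-hand side is $CMr^{-\varrho}\to 0$ as $r\to\infty$. The paper's own proof breaks at the substitution $y=rz$ in the display defining $\Delta_h$: the Jacobian is $r^{d}$, not $r^{-d}$, and with the correct factor the inner estimate becomes $|\Delta_h^1|\le CM|h|\,r^{d+1+\gamma}\|\nabla p_t\|_{\infty}R^{d+\gamma}$, after which no choice of $R$ produces $CMr^{\gamma}|h|^{\varrho}$ (this of course puts the proof of Theorem~\ref{main-3} in jeopardy as well). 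Apart from this, your argument is the same truncation scheme as the paper's --- Lipschitz bound from $p_t\in C_b^1$ inside a ball, $\beta$-moment tail estimate outside, optimization of the radius --- but carried out in unscaled coordinates, where there is no Jacobian to get wrong; it correctly establishes the ``in particular'' assertion $\|P_tu\|_{C_b^{\varrho}(B(0,r))}\le (C+2)Mr^{\gamma}$, which is the part of the lemma that survives. Two small repairs are still needed: the choice $\rho=|\eta|^{-1/(d+\beta)}$ can violate your constraint $\rho\ge 2|\eta|$ when $|\eta|$ is close to $1$ (take $\rho=2|\eta|^{-1/(d+\beta)}$ instead), and in the regime $|\eta|>1$ the a priori bound $\sup_{|\zeta|\le 2r}|P_tu(\zeta)|\le CMr^{\gamma}$ only covers $|\eta|\le r$, not arbitrary displacements --- which is, however, all that is ever used.
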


\begin{proof}
	Because of the growth assumption on $u$, it follows from $\mbb{E}(|X_t|^{\beta})<\infty$ that $P_t u$ is well-defined. Fix $r,R \geq 1$ and $x,h\in \mbb{R}^d$ with $|h|, |x| \leq 1$. By the definition of the semigroup, \begin{align*}
		\Delta_h
		:= P_t u(rx+rh)-P_t u(rx)
		&= \int_{\mbb{R}^d} u(y) \left( p_t(y+rx+rh)-p_t(y+rx)\right) \, dy \\
		&= r^{-d} \int_{\mbb{R}^d} u(rz) \left( p_t(rz+rx+rh)-p_t(rz+rx) \right) \, dz.
	\end{align*} 
	Thus, $\Delta_h=\Delta_h^1 + \Delta_h^2$, where \begin{align*}
		\Delta_h^1 &:= r^{-d} \int_{|z| \leq R}u(rz) \left( p_t(rz+rx+rh)-p_t(rz+rx) \right) \, dz, \\
		\Delta_h^2 &:=  r^{-d} \int_{|z| > R}u(rz) \left( p_t(rz+rx+rh)-p_t(rz+rx) \right) \, dz.
	\end{align*}
	Applying the mean value theorem and using the growth condition on $u$, we find that \begin{align*}
		|\Delta_h^1|
		\leq |h| r^{-d+1} \|\nabla p_t\|_{\infty} \int_{|z| \leq R} |u(rz)|\, dz
		\leq 2M |h| r^{-d+1+\gamma} \|\nabla p_t\|_{\infty} R^{d+\gamma}.
	\end{align*}
	For the second term, we use again the growth condition on $u$: \begin{align*}
		|\Delta_h^2|
		&\leq 4M r^{-d+\gamma} \sup_{|h| \leq 1} \int_{|z|>R} |z|^{\gamma} p_t(rz+rx+rh) \, dz \\
		&\leq 4M r^{-d+\gamma} R^{\gamma-\beta} \sup_{|h| \leq 1} \int_{\mbb{R}^d} |z|^{\beta} p_t(rz+rh+rx) \, dz.
	\end{align*}
	Performing a change of variables and using the elementary estimate \begin{equation*}
		(a+b)^{\beta} \leq c_{\beta} (a^{\beta}+b^{\beta}), \qquad a,b \geq 0,
	\end{equation*}
	we get \begin{align*}
		|\Delta_h^2|
		&\leq 4M r^{\gamma-\beta} R^{\gamma-\beta} \sup_{|h| \leq 1} \int_{\mbb{R}^d} |y-(rx+rh)|^{\beta} p_t(y) \, dy \\
		&\leq 4 \, 2^{\beta} c_{\beta} M r^{\gamma} R^{\gamma-\beta} \left(1+ \int_{\mbb{R}^d} |y|^{\beta} p_t(y) \, dy \right).
	\end{align*}
	Note that the integral on the right-hand side is finite since $\mbb{E}(|X_t|^{\beta})<\infty$. Consequently, we have shown that there exists a constant $C=C(\beta,t)>0$ such that \begin{equation*}
		|P_t u(rx+rh)-P_t u(rx)| = |\Delta_h| \leq C M r^{\gamma} R^{d+\gamma} |h| + C M r^{\gamma} R^{\gamma-\beta} \fa |h|,|x| \leq 1, \, r \geq 1.
	\end{equation*}
	Choosing $R:=|h|^{-1/(d+\beta)}$ gives \eqref{p-eq51}. The remaining assertion is obvious from \eqref{p-eq51}.
\end{proof}

If $(P_t)_{t \geq 0}$ is the semigroup associated with a subordinated Brownian motion $(X_t)_{t \geq 0}$, then the regularity estimate from Proposition~\ref{p-7} can be improved. We do not need this strengthened version for the proof of the Liouville theorem, but we present the proof since we believe that the result is of independent interest. Recall that a L\'evy process $(S_t)_{t \geq 0}$ is a subordinator if $(S_t)_{t \geq 0}$ has non-decreasing sample paths.

\begin{prop} \label{p-7}
	Let $(X_t)_{t \geq 0}$ be a L\'evy process which is of the form $X_t = B_{S_t}$ for a $d$-dimensional Brownian motion $(B_t)_{t \geq 0}$ and a subordinator $(S_t)_{t \geq 0}$ satisfying $\mathbb{P}(S_t=0)=0$ for all $t>0$. Denote by $(b,Q,\nu)$ the L\'evy triplet of $(X_t)_{t \geq 0}$, and let $\beta>0$ be such that $\int_{|y| \geq 1} |y|^{\beta}\, \nu(dy)<\infty$. If $u:\mbb{R}^d \to \mbb{R}$ is a measurable function satisfying $|u(x)| \leq M(1+|x|^{\gamma})$, $x \in \mbb{R}^d$, for some $M>0$ and $\gamma \in [0,\beta]$, then $x \mapsto P_t u(x)$ is smooth for all $t>0$ and \begin{equation}
		\|P_t u\|_{C_b^k(B(0,r))} \leq C_k M r^{\gamma} \fa r \geq 1, \, k \geq 1, \label{p-eq71}
	\end{equation}
	where $C_k=C_k(t)$ is a finite constant, which does not depend on $u$ and $r$.
\end{prop}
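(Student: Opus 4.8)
The plan is to exploit the subordination structure by conditioning on $S_t$. Write $\mu_t := \law(S_t)$, a probability measure carried by $(0,\infty)$ since $\mbb{P}(S_t=0)=0$, let $g_s(y) := (2\pi s)^{-d/2} e^{-|y|^2/(2s)}$ be the Gaussian density and $P_s^B v(x) := \int_{\mbb{R}^d} v(x+y) g_s(y)\,dy$ the heat semigroup. The starting point is the pair of identities
\begin{equation*}
	P_t u(x) = \mbb{E}\big( u(x+B_{S_t}) \big) = \int_{(0,\infty)} P_s^B u(x)\, \mu_t(ds), \qquad p_t(x) = \int_{(0,\infty)} g_s(x)\, \mu_t(ds),
\end{equation*}
the interchange being legitimate by Tonelli, the polynomial growth of $u$ and $\mbb{E}(|X_t|^{\beta})<\infty$. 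The whole proposition then reduces to a single analytic claim: for every multi-index $\alpha$ the density $p_t$ is $C^{|\alpha|}$ and satisfies the weighted bound $\int_{\mbb{R}^d} (1+|z|^{\gamma})\, |\partial^{\alpha} p_t(z)|\, dz < \infty$. Granting this, $\partial^{\alpha} P_t u(x) = (-1)^{|\alpha|}\int_{\mbb{R}^d} u(y)\,(\partial^{\alpha} p_t)(y-x)\,dy$ is well defined, and the change of variables $z=y-x$ together with $(a+b)^{\gamma}\leq c_{\gamma}(a^{\gamma}+b^{\gamma})$ bounds it on $B(0,r)$ by $C_{\alpha} M r^{\gamma}$ for $r\geq 1$; taking the maximum over $|\alpha|\leq k$ gives \eqref{p-eq71}. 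This final rescaling is exactly the one used in Lemma~\ref{p-5}, only easier, since we now dispose of genuine derivatives rather than difference quotients.

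To prove the analytic claim I would differentiate the Gaussian explicitly. Because $g_s$ factorises over coordinates and $\partial_{y_j} g_s = -(y_j/s) g_s$, one has $\partial^{\alpha} g_s(y) = s^{-|\alpha|/2} h_{\alpha}(y/\sqrt{s})\, g_s(y)$ with $h_{\alpha}$ a product of Hermite polynomials of total degree $|\alpha|$, and a scaling computation yields
\begin{equation*}
	\int_{\mbb{R}^d} (1+|z|^{\gamma})\, |\partial^{\alpha} g_s(z)|\, dz \leq C_{\alpha,\gamma}\,\big( s^{-|\alpha|/2} + s^{(\gamma-|\alpha|)/2}\big), \qquad s>0.
\end{equation*}
Moving the derivatives through the $\mu_t$-integral (by dominated convergence, locally uniformly in $z$, which also yields continuity of $\partial^{\alpha}p_t$) reduces the claim to the finiteness of $\int_{(0,\infty)} ( s^{-|\alpha|/2} + s^{(\gamma-|\alpha|)/2})\, \mu_t(ds)$. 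The positive-exponent contributions are harmless, being dominated by $1+\mbb{E}(S_t^{\gamma/2})$ and $\mbb{E}(S_t^{\gamma/2})=c_{\gamma}\mbb{E}(|X_t|^{\gamma})<\infty$; so the entire burden falls on the negative moments $\mbb{E}(S_t^{-|\alpha|/2})$ (the term $s^{(\gamma-|\alpha|)/2}$ with $|\alpha|>\gamma$ being of the same but milder type).

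The main obstacle is thus the mass that $S_t$ places near $s=0$: the Gaussian derivatives blow up like $s^{-|\alpha|/2}$ as $s\downarrow 0$, and one must establish $\mbb{E}(S_t^{-|\alpha|/2})<\infty$ for every order $|\alpha|$. This is precisely where the hypothesis $\mbb{P}(S_t=0)=0$ must be turned into a quantitative small-ball estimate — for instance via $\mbb{P}(S_t\leq\eps)\leq e^{\lambda\eps - t\phi(\lambda)}$, where $\phi$ is the Laplace exponent of $(S_t)_{t\geq0}$, optimised over $\lambda>0$ — so that a sufficiently fast growth of $\phi$ at infinity forces all negative moments to be finite. Away from the origin there is nothing delicate: for $s$ bounded below, $g_s$ and all its derivatives are Schwartz functions with bounds locally uniform in $s$, so differentiation under the $\mu_t$-integral and continuity of $\partial^{\alpha}p_t$ are routine there. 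Once the negative moments are secured, assembling the weighted $L^1$ bound on $\partial^{\alpha}p_t$, the differentiation of the convolution and the $r^{\gamma}$ rescaling delivers \eqref{p-eq71} and the smoothness of $P_t u$.
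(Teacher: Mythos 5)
Your argument funnels everything into a single claim that you leave unproven: $\mbb{E}(S_t^{-|\alpha|/2})<\infty$ for every order $|\alpha|$. This is a genuine gap, and it cannot be closed under the stated hypotheses, because the claim is false in general. The assumption $\mbb{P}(S_t=0)=0$ holds for \emph{every} subordinator with infinite L\'evy measure (or positive drift) and carries no quantitative information about the growth of the Laplace exponent $\phi$; your proposed Chernoff bound $\mbb{P}(S_t\leq\eps)\leq e^{\lambda\eps-t\phi(\lambda)}$ yields nothing unless one additionally \emph{assumes} fast growth of $\phi$. Concretely, take the Gamma subordinator, $\phi(\lambda)=\log(1+\lambda)$, so that $\mbb{E}(e^{-\lambda S_t})=(1+\lambda)^{-t}$ and
\begin{equation*}
	\mbb{E}\big(S_t^{-q}\big)=\frac{1}{\Gamma(q)}\int_0^{\infty}\lambda^{q-1}(1+\lambda)^{-t}\,d\lambda<\infty
	\quad\Longleftrightarrow\quad q<t .
\end{equation*}
For $t\leq 1/2$ already $\mbb{E}(S_t^{-1/2})=\infty$, so your weighted bound on $\int(1+|z|^{\gamma})|\partial^{\alpha}p_t(z)|\,dz$ diverges at the very first derivative. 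Moreover, this is not an artifact of your method: for the corresponding variance gamma process in $d=1$ one has $p_t(x)\asymp |x|^{2t-1}$ as $x \to 0$ when $t<1/2$, hence $p_t\notin W^{1,1}$, and for the bounded function $u=\I_{[0,1]}$ (so $\gamma=0$, while all moment conditions on $\nu$ hold) the difference quotient of $P_tu$ at $x=0$ contains the term $h^{-1}\int_{-h}^{0}p_t(y)\,dy\asymp h^{2t-1}\to\infty$. So $P_tu$ is not even differentiable: under the hypotheses as stated, the conclusion of the proposition itself fails for small $t$, and no completion of your argument can exist without strengthening the assumptions (e.g.\ demanding $\mbb{E}(S_t^{-q})<\infty$ for all $q>0$, which is what a condition like $\phi(\lambda)/\log\lambda\to\infty$ would deliver).

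For comparison, the paper's proof takes a different route which, for the tail part, is more elegant than yours: instead of differentiating the Gaussian under the subordination integral, it uses the dimension-walk identity $\frac{d}{dr}p_t^{(k)}(r)=-2\pi r\,p_t^{(k+2)}(r)$, i.e.\ the radial derivative of the density in dimension $k$ is, up to a factor, the density of the \emph{same} subordinated Brownian motion in dimension $k+2$. On $\{|x|\geq 1\}$ this converts weighted $L^1$ bounds on $\nabla p_t^{(k)}$ into positive moments $\mbb{E}(|X_t^{(k+2)}|^{\gamma})=\mbb{E}(S_t^{\gamma/2})\,\mbb{E}(|B_1^{(k+2)}|^{\gamma})$, which are finite independently of the dimension; no negative moments appear explicitly. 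Near the origin, however, the paper merely asserts that $|\nabla p_t^{(k)}|$ is bounded on compact sets, and that is exactly where your obstruction resurfaces: by the same dimension-walk identity and polar coordinates, $\int_{\mbb{R}^k}|\nabla p_t^{(k)}(x)|\,dx=c_k\,\mbb{E}(S_t^{-1/2})$, so this step needs precisely the finiteness that blocks your proof (and fails for the Gamma subordinator at small $t$). In short, your reduction correctly isolates the true analytic obstruction -- the mass of $S_t$ near $0$ -- but the obstruction is insurmountable under the stated hypotheses; the paper's argument appears to go through only because this point is hidden inside an unjustified local boundedness claim.
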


Let us mention that $\mathbb{P}(S_t=0)=0$ is equivalent to assuming that $(X_t)_{t \geq 0}$ has a density with respect to Lebesgue measure, cf.\ \cite[Lemma 4.6]{schoenberg}.

\begin{proof}[Proof of Proposition~\ref{p-7}]
	For $k \geq 1$ let $(B_t^{(k)})_{t \geq 0}$ be a $k$-dimensional Brownian motion. The process $X_t^{(k)} := B_{S_t}^{(k)}$ is a L\'evy process with L\'evy triplet, say, $(b^{(k)},Q^{(k)},\nu^{(k)})$, cf.\ \cite{bernstein} or \cite{sato}. By definition, $X_t=X_t^{(d)}$ and $\nu=\nu^{(d)}$. Since $(B_t^{(k)})_{t \geq 0}$ and $(S_t)_{t \geq 0}$ are independent, cf.\ \cite[Theorem II.6.3]{ikeda}, it follows from $B_t^{(k)}= \sqrt{t} B_1^{(k)}$ in distribution that \begin{equation*}
		\mathbb{E}(|B_{S_t}^{(k)}|^{\beta}) = \mathbb{E}(|S_t|^{\beta/2}) \mathbb{E}(|B_1^{(k)}|^{\beta}).
	\end{equation*}
	Consequently, \begin{align*}
		\int_{|y| \geq 1} |y|^{\beta} \, \nu^{(k)}(dy) < \infty
		\iff \mathbb{E}(|B_{S_t}^{(k)}|^{\beta})< \infty 
		\iff \mbb{E}(|S_t|^{\beta/2})<\infty,
	\end{align*}
	and so the finiteness of the fractional moment $\int_{|y| \geq 1} |y|^{\beta} \, \nu^{(k)}(dy)$ does not depend on the dimension $k$. By assumption, the moment is finite for $k=d$, and hence it is finite for all $k \geq 1$. Thus, $\mbb{E}(|X_t^{(k)}|^{\beta})<\infty$ for all $k \geq 1$ and $t \geq 0$. As $\mathbb{P}(S_t=0)$, the process $(X_t^{(k)})_{t \geq 0}$ has a rotational invariant and smooth density $p_t^{(k)}(x)=p_t^{(k)}(|x|)$, \begin{equation*}
		\mbb{P}(X_t^{(k)} \in dx) = p_t^{(k)}(|x|) \, dx
	\end{equation*}
	 and \begin{equation}
		\frac{d}{dr} p_t^{(k)}(r) = - 2\pi p_t^{(k+2)}(r), \qquad k \geq 1, \, r>0, \label{p-eq75}
	\end{equation}
	cf.\  \cite[Corollary 3.2, Lemma 4.6]{schoenberg}. Using polar coordinates, we get \begin{equation*}
		\int_{|x| \geq 1} |x|^{\gamma} |\nabla p_t^{(k)}(x)| \, dx
		= c \int_{r \geq 1} r^{\gamma+d} p_t^{(k+2)}(r) \, dr
		\leq c' \mbb{E}(|X_t^{(k+2)}|^{\gamma}) <\infty
	\end{equation*}
	for all $\gamma \in [0,\beta]$ and $k \geq 1$. Since the continuous function $|\nabla p_t^{(k)}|$ is bounded on compact sets, this implies \begin{equation*}
		\int_{\mbb{R}^k} (1+|x|^{\gamma}) |\nabla p_t^{(k)}(x)| \, dx< \infty \fa t \geq 0, \, \gamma \in [0,\beta], \, k \geq 1.
	\end{equation*}
	Applying iteratively \eqref{p-eq75} with $k=d+2n$, $n \in \mbb{N}$, we find that \begin{equation}
		\int_{\mbb{R}^d} (1+|x|^{\gamma}) |\partial^{\alpha} p_t^{(d)}(x)| \, dx < \infty \label{p-eq77}
	\end{equation}
	for all $\gamma \in [0,\beta]$, $t \geq 0$ and all multi-indices $\alpha \in \mbb{N}_0^d$. Now we return to our original problem, i.e.\ we study the regularity of the semigroup $(P_t)_{t \geq 0}$ associated with $X_t= X_t^{(d)}$. Fix a measurable function $u$ with $|u(x)| \leq M(1+|x|^{\gamma})$ for some constants $M>0$ and $\gamma \in [0,\beta]$. By definition, \begin{equation*}
		  P_t u(x) = \mbb{E}u(x+X_t) = \int u(y) p_t^{(d)}(y-x) \, dy, \quad x \in \mbb{R}^d.
	\end{equation*}
	By \eqref{p-eq77}, we have $\int_K \int_{\mbb{R}^d} |u(y)|\, |\partial_{x_j} p_t^{(d)}(y-x)| \, dy \, dx < \infty$ for every $j=1,\ldots,d$ and every compact set $K \subseteq \mbb{R}^d$. Moreover, it follows by a similar reasoning to that at the end of the proof of Lemma~\ref{p-3} that the mapping \begin{equation*}
		x \mapsto \int_{\mbb{R}^d} u(y) \partial_{x_j} p_t^{(d)}(y-x) \, dy
	\end{equation*}
	is continuous. Applying the differentiation lemma for parametrized integrals, cf.\ \cite[Proposition A.1]{euler-maruyama}, we obtain that \begin{equation*}
		\partial_{x_j} P_t u(x) = -\int_{\mbb{R}^d} u(y) \partial_{x_j} p_t^{(d)}(y-x) \, dy, \qquad j=1,\ldots,d, \, x \in \mbb{R}^d.
	\end{equation*}
	Performing a change of variables $y \rightsquigarrow y+x$, it is immediate from \eqref{p-eq77} and the growth condition on $u$ that $\|P_t u\|_{C_b^1(B(0,R))} \leq C M R^{\beta}$,  $R \geq 1$, for some constant $C>0$. Iterating the procedure proves the assertion for higher order derivatives.
\end{proof}

\section{Proof of Liouville's theorem} \label{proof}

In this section, we prove the Liouville theorem, cf.\ Theorem~\ref{main-3}. First, we use a general result by Choquet \& Deny \cite{choquet60} to show that the only bounded solutions to the convolution equation $P_t u=u$ are the trivial ones. 

\begin{prop} \label{p-9}
	Let $(X_t)_{t \geq 0}$ be a L\'evy process with characteristic exponent $\psi$ and semigroup $(P_t)_{t \geq 0}$, and denote by $Af = - \psi(D) f$ the associated L\'evy generator. Assume that $X_t$ has a density $p_t \in C_b(\mbb{R}^d)$ for some $t>0$. \begin{enumerate}
		\item\label{p-9-i} If $u$ is a bounded measurable function such that $P_t u=u$ a.e., then $u$ is constant a.e.
		\item\label{p-9-ii} (Liouville property) If $u \in L^{\infty}(\mbb{R}^d)$ and $Au=0$ weakly, then $u$ is constant a.e.
	\end{enumerate}
\end{prop}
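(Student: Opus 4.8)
The plan is to reduce both assertions to the Choquet--Deny theorem \cite{choquet60}, which describes the bounded continuous solutions of a convolution equation $v(x) = \int_{\mbb{R}^d} v(x+y)\,\mu(dy)$ on $\mbb{R}^d$: they are exactly the functions that are invariant under every translation by an element of the closed subgroup $H$ generated by $\supp \mu$. I will first prove (\ref{p-9-i}) and then deduce (\ref{p-9-ii}) from it together with Lemma~\ref{p-3}.

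For (\ref{p-9-i}), fix $t>0$ with $p_t \in C_b(\mbb{R}^d)$ and write $\mu_t := \mbb{P}(X_t \in \cdot)$, so that $P_t u(x) = \int_{\mbb{R}^d} u(x+y)\,\mu_t(dy)$. The first, and decisive, step is a regularisation. Set $v := P_t u$. Since $u$ is bounded and $p_t \in L^1(dx)$, the function $v$ is bounded and continuous (continuity of translations in $L^1$). Because $P_t u = u$ a.e. we have $v = u$ a.e.; and since $\mu_t$ is absolutely continuous, $P_t$ sends a.e.-equal functions to equal functions, whence $P_t v = P_t u = v$ \emph{everywhere}. Thus $v$ is a bounded continuous solution of $v(x) = \int_{\mbb{R}^d} v(x+y)\,\mu_t(dy)$, and Choquet--Deny applies pointwise.

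The second step identifies $H$. As $p_t$ is continuous with $\int p_t = 1$, the open set $\{p_t>0\}$ is nonempty and contained in $\supp \mu_t$, and a nonempty open subset of $\mbb{R}^d$ generates $\mbb{R}^d$ as a closed subgroup (differences of points of a small ball fill a neighbourhood of $0$, and a subgroup containing a neighbourhood of $0$ is all of $\mbb{R}^d$). Hence $H = \mbb{R}^d$, so $v$ is invariant under every translation, i.e. constant; since $u = v$ a.e., $u$ is constant a.e. For (\ref{p-9-ii}), let $u \in L^\infty(\mbb{R}^d)$ with $Au=0$ weakly. Because $\nu(\{|y| \geq 1\})<\infty$ always holds, the hypotheses of Lemma~\ref{p-3} are satisfied with $\beta=0$, and the lemma furnishes a continuous representative $\tilde u$ of $u$ with $P_t \tilde u = \tilde u$ for the relevant $t$ (one may take $t$ as large as needed, a $C_b$ density at one time persisting for all later times by the semigroup property). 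As $\tilde u$ is bounded, part (\ref{p-9-i}) shows $\tilde u$ is constant; being continuous it is constant everywhere, and therefore $u = \tilde u$ is constant a.e.

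The main obstacle is the regularisation in (\ref{p-9-i}): Choquet--Deny requires the equation to hold \emph{pointwise for a continuous} function, whereas we are only handed $P_t u = u$ on a set of full measure. The two facts that resolve this are that $P_t u$ is automatically continuous and that $\mu_t$, having a density, ignores Lebesgue null sets, so the convolution identity propagates to hold everywhere for $v = P_t u$. The remaining points---that $\supp\mu_t$ generates $\mbb{R}^d$ and that (\ref{p-9-ii}) reduces to (\ref{p-9-i}) via Lemma~\ref{p-3}---are then routine.
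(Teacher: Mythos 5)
Your proof is correct and takes essentially the same route as the paper: regularise by replacing $u$ with $v=P_t u$ (using that the absolutely continuous law of $X_t$ ignores Lebesgue null sets, so $P_t v = v$ pointwise), apply Choquet--Deny together with the observation that $\{p_t>0\}$ is a nonempty open subset of the support of the law of $X_t$, and deduce part (ii) from Lemma~\ref{p-3} combined with part (i). If anything, you are more careful than the paper on two points: you spell out the continuity of $P_t u$ needed to apply Choquet--Deny, and your remark that a $C_b$-density at one time persists for all later times patches the mismatch between the ``some $t>0$'' hypothesis of the proposition and the ``all $t>0$'' hypothesis of Lemma~\ref{p-3}.
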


\begin{proof} \begin{enumerate}[wide, labelwidth=!, labelindent=0pt]
	\item  Without loss of generality, we may assume that $P_t u(x)=u(x)$ for all $x \in \mbb{R}^d$; otherwise replace $u$ by $\tilde{u} :=P_t u$ and note that $P_t u=P_t \tilde{u}$ as $X_t$ has a density with respect to Lebesgue measure. Since $\int_{\mbb{R}^d} p_t(y) \, dy =1$ and $p_t \geq 0$ is continuous, there exist $x_0 \in \mbb{R}^d$ and $r>0$ such that $p_t(y) >0$ for all $y \in B(x_0,r)$. In particular, $B(x_0,r)$ is contained in the support of the distribution of $X_t$. By \cite[Theorem 1]{choquet60}, this implies \begin{equation*}
		u(x)=u(x+y) \fa x \in \mbb{R}^d, \, y \in B(x_0,r),
	\end{equation*}
	Hence, $u$ is constant. 
	\item This is immediate from Lemma~\ref{p-3} and \eqref{p-9-i}. \qedhere
\end{enumerate} \end{proof}

We are now ready to prove the Liouville theorem. 

\begin{proof}[Proof of Theorem~\ref{main-3}]
	By Lemma~\ref{p-3}, we may assume without loss of generality that $u$ is continuous and $u(x)=P_1 u(x)$ for all $x \in \mbb{R}^d$. Applying Lemma~\ref{p-5}, we find that there exists a constant $C>0$ such that \begin{equation}
		|u(rx'+rh')-u(rx')|
		= |P_1 u(rx'+rh')-P_1 u(rx')|
		\leq C M r^{\gamma} |h'|^{\varrho}, \qquad |h'|,|x'| \leq 1, \, r \geq 1 \label{p-eq91}
	\end{equation}
	for $\varrho:=(\beta-\gamma)/(d+\beta)>0$ and some constant $C=C(\beta)>0$. This implies \begin{equation*}
		|u(x+h)-u(x)| \leq 2C M (1+|x|^{\gamma-\varrho}) |h|^{\varrho} \fa x \in \mbb{R}^d, |h| \leq 1.
	\end{equation*}
	Indeed: If $|x| \leq 1$, then this follows from \eqref{p-eq91} for $r=1$, $x'=x$ and $h'=h$; if $|x|>1$ we choose $r=|x|$, $h'=h/r$ and $x'=x/r$ in \eqref{p-eq91}. This means that for each fixed $h \in \mbb{R}^d$, $0<|h| \leq 1$, the function $v(x) := |h|^{-\varrho} (u(x+h)-u(x))$ satisfies \begin{equation*}
		|v(x)| \leq 2CM \left(1+|x|^{\gamma-\varrho}\right), \qquad x \in \mbb{R}^d.
	\end{equation*}
	Since the semigroup $(P_t)_{t \geq 0}$ is invariant under translations, we have $P_1 v=v$, and therefore we can apply the above reasoning to $v$ (instead of $u$) to obtain that \begin{equation*}
		|v(x+h)-v(x)| \leq 4C^2 M^2 \left(1+|x|^{\gamma-2\varrho}\right) |h|^{\varrho}, \qquad x \in \mbb{R}^d,\, |h| \leq 1.
	\end{equation*}
	Define iteratively $\Delta_h u(x) := u(x+h)-u(x)$ and $\Delta_h^k u(x):= \Delta_h (\Delta_h^{k-1} u)(x)$, $k \geq 2$, then the previous inequality shows \begin{equation*}
		|\Delta_h^2 u(x)| \leq 4 C^2 M^2 \left(1+|x|^{\gamma-2\varrho}\right) |h|^{2\varrho}, \qquad x \in \mbb{R}^d, \,|h| \leq 1.
	\end{equation*}
	Iterating the procedure, we find that \begin{equation*}
		|\Delta_h^k u(x)| \leq (2CM)^k \left(1+|x|^{\gamma-k \varrho}\right) |h|^{k \varrho}, \qquad x \in \mbb{R}^d,\, |h| \leq 1,
	\end{equation*}
	for the largest integer $k \geq 1$ such that $\gamma-k \varrho \geq 0$; the latter condition ensures that the constant $\gamma$ in Lemma~\ref{p-5} is non-negative. Applying once more Lemma~\ref{p-5}, we get 
	\begin{equation*}
		|\Delta_h^k u(rx'+rh')-\Delta_h^k u(rx')| \leq (2CM)^{k+1} r^{\gamma-k\varrho } |h'|^{\varrho} |h|^{k\varrho}, \qquad |x'|,|h'| \leq 1, \, r \geq 1.
	\end{equation*}
	If $x,h \in \mbb{R}^d$ are such that $|x| \geq 1$ and $|h| \leq 1$, then we obtain from this inequality for $r=|x|$, $x'=x/r$ and $h'=h/r$ that \begin{equation*}
		|\Delta_h^k u(x+h)-\Delta_h^k u(x)|
		\leq (2CM)^{k+1} |x|^{\gamma-(k+1)\varrho} |h|^{(k+1)\varrho}.
	\end{equation*}
	As $\gamma-(k+1)\varrho<0$, this gives \begin{equation*}
		\sup_{|x|>r} |\Delta_h^{k+1} u(x)|
		 \leq (2CM)^{k+1} r^{\gamma-(k+1)\varrho} |h|^{(k+1) \varrho}
		\xrightarrow[]{r \to \infty} 0.
	\end{equation*}
	Consequently, $x \mapsto w(x):=\Delta_h^{k+1} u(x)$ is for each fixed $|h| \leq 1$ a continuous function which vanishes at infinity and which satisfies $P_1 w=w$. The Liouville property, cf.\ Proposition~\ref{p-9}, yields $w=0$, i.e.\ $\Delta_h^{k+1} u(x)=0$ for all $x \in \mbb{R}^d$ and $|h| \leq 1$.
	We claim that this implies that $u$ is a polynomial. Take $\varphi \in C_c^{\infty}(\mbb{R}^d)$ with $\varphi \geq 0$ and $\int_{\mbb{R}^d} \varphi(x) \, dx=1$, and set $\varphi_n(x) := n^d \varphi(nx)$. The convolution $u_n := u \ast \varphi_n$ satisfies $\Delta_h^{k+1} u_n(x)=0$ for all $x \in \mbb{R}^d$ and $|h| \leq 1$. Since $u_n$ is smooth, we have \begin{equation*}
		\partial_{x_j}^{k+1} u_n(x) = \lim_{r \downarrow 0} \frac{\Delta_{r e_j}^{k+1} u_n(x)}{r^{k+1}} = 0
	\end{equation*}
	for all $x \in \mbb{R}^d$, $j\in\{1,\ldots,d\}$ and $n \in \mbb{N}$; here $e_j$ denotes the $j$-th vector in $\mbb{R}^d$. Hence, $\partial^{\alpha} u_n=0$ for all $|\alpha| \geq N:=(k+1)d$, and so $u_n$ is a polynomial of degree at most $N$ for each $n \in \mbb{N}$. Since $u_n$ converges pointwise to $u$, it follows that $u$ is a polynomial of degree at most $N$.  
	Recalling that $u$ satisfies by assumption the growth condition $|u(x)| \leq M(1+|x|^{\gamma})$ for all $x \in \mbb{R}^d$, we conclude that $u$ is a polynomial of order at most $\floor{\gamma}$.
\end{proof}
	
\begin{ack}	
	I am grateful to Prof.\ Niels Jacob for his comments which helped to improve the presentation of this paper.
\end{ack}


\begin{thebibliography}{99}\frenchspacing	
\bibitem{jakobsen}
	Alibaud, N., del Teso, F., Endal, J., Jakobsen, E. R.: Characterization of nonlocal diffusion operators satisfying the Liouville theorem. Irrational numbers and subgroups of $\mathbb{R}^d$. Preprint, arXiv 1807.01843.

\bibitem{barlow00} 
	Barlow, M. T., Bass, R. F., Gui, C.: The Liouville property and a conjecture of De Georgi. \emph{Comm.\ Pure Appl.\ Math.} \textbf{53} (2000), 1007--1038.

  
  \bibitem{cheng15} 
  	Cheng, T., Chen, W., Li, C., Zhang, L.: A Liouville theorem for $\alpha$-harmonic functions in $\mathbb{R}^n_+$. \emph{Discrete Contin.\ Dyn.\ Syst.} \textbf{36} (2015), 1721--1736.
  
  
\bibitem{choquet60}
	Choquet, G., Deny, J.: Sur l’\'equation de convolution $\mu=\mu \ast \sigma$. \emph{C.R.\ Acad.\ Sci.\ Paris} \textbf{250} (1960), 799--801.

\bibitem{fall16} 
	Fall, M.\ M., Weth, T.: Liouville Theorems for a General Class of Nonlocal Operators. \emph{Potential Anal.} \textbf{45} (2016), 187--200.

\bibitem{hulanicki}
		Hulanicki, A.: A class of convolution semi-groups of measures on a Lie group. In: Weron, A.: \emph{Probability Theory on Vector Spaces}, Springer 1980, pp.~82--101.


\bibitem{ikeda} 
	Ikeda, N., Watanabe, S.: \emph{Stochastic Differential Equations and Diffusion Processes}. North-Holland, 1992.


\bibitem{jacob1} 
	Jacob, N.: \emph{Pseudo Differential Operators and Markov processes I: Fourier Analysis and Semigroups}. World Scientific, 2001.
	
	\bibitem{barcelona} Khoshnevisan, D., Schilling, R.L.: \emph{From L\'evy-type Processes to Parabolic SPDEs}. Birkh\"auser, 2016.
	
	
\bibitem{knop13} Knopova, V., Schilling, R. L.: A note on the existence of transition probability densities of L\'evy processes. \emph{Forum Math.} \textbf{25} (2013), 125--149.
	
	
\bibitem{knop19} 
	Knopova, V., Schilling, R.: On the Liouville property for non-local L\'evy generators. Preprint, arXiv 1909.01237

\bibitem{moments} 
	K\"uhn, F.: Existence and estimates of moments for L\'evy-type processes. \emph{Stoch.\ Proc.\ Appl.} \textbf{127} (2017), 1018--1041.

\bibitem{reg-levy} 
	K\"uhn, F.: Schauder Estimates for Equations Associated with L\'evy Generators. \emph{Integral Equations Operator Theory} \textbf{91}:10 (2019).

\bibitem{schoenberg} 
	K\"uhn, F., Schilling, R. L.: A probabilistic proof of Schoenberg’s theorem. \emph{J.\ Math.\ Anal.\ Appl.} \textbf{476} (2019), 13--26.
	
\bibitem{euler-maruyama} 
	K\"uhn, F., Schilling, R. L.: Strong convergence of the Euler–Maruyama approximation for a class of L\'evy-driven SDEs. \emph{Stoch.\ Proc.\ Appl.} \textbf{129} (2019), 2654--2680.
		
\bibitem{uemura11} 
	Masamune, J., Uemura, T.: $L^p$-Liouville property for non-local operators. \emph{Math.\ Nachr.} \textbf{284} (2011), 2249--2267.

\bibitem{priola04} 
	Priola, E., Zabczyk, J.: Liouville theorems for non-local operators. \emph{J. Funct.\ Anal.} \textbf{216} (2004), 455--490.
		
\bibitem{ros-oton16} 
	Ros-Oton, X., Serra, J.: Regularity theory for general stable operators. \emph{J.\ Diff.\ Equations} \textbf{260} (2016), 8675--8715.

\bibitem{sato} 
	Sato, K.: \emph{L\'evy processes and infinitely divisible distributions}. Cambridge University Press, 2013 (revised ed).

\bibitem{mims} 
	Schilling, R. L.: \emph{Measures, integrals and martingales}. Cambridge University Press, 2017 (2nd ed).
	
\bibitem{bernstein} 
	Schilling, R. L., Song, R., Vondra\v{c}ek, Z.: \emph{Bernstein functions: theory and applications}. De Gruyter, 2012 (2nd ed).

\bibitem{wang12} 
	Schilling, R. L., Wang, J.: On the coupling property and the Liouville theorem for Ornstein–Uhlenbeck processes. \emph{J.\ Evol.\ Equ.} \textbf{12} (2012), 119--140.


\end{thebibliography}
\end{document}